\numberwithin{equation}{section}
\theoremstyle{plain}
\newtheorem{theorem}{Theorem}[section]
\newtheorem{corollary}[theorem]{Corollary}
\newtheorem{lemma}[theorem]{Lemma}
\title{Ramanujan-type Congruences for Partition $k$-Tuples with $5$-Cores}
\author{Manjil P. Saikia}
\address{Department of Humanities and Basic Sciences, Indian Institute of Information Technology (IIIT) Manipur, Mantripukhri, Imphal 795002, India}
\email{manjil@saikia.in}
\author[A. Sarma]{Abhishek Sarma}
\address{Department of Mathematical Sciences, Tezpur University, Napaam 784028, Assam, India}
\email{abhiraaj002@gmail.com}
\author[P. Talukdar]{Pranjal Talukdar}
\address{Department of Mathematical Sciences, Tezpur University, Napaam 784028, Assam, India}
\email{msp21105@tezu.ac.in}
\keywords{integer partitions, Ramanujan-type congruences.}
\subjclass[2020]{11P81, 11P83.}
\begin{document}

\maketitle

\begin{abstract}
    We prove several Ramanujan-type congruences modulo powers of $5$ for partition $k$-tuples with $5$-cores, for $k=2, 3, 4$. We also prove some new infinite families of congruences modulo powers of primes for $k$-tuples with $p$-cores, where $p$ is a prime.
\end{abstract}

\section{Introduction}

A partition of a positive integer $n$ is a finite non-increasing sequence of positive integers $\lambda=(\lambda_1, \lambda_2, \ldots, \lambda_k)$ such that $\sum\limits_{i=1}^k\lambda_i=n$. The number of partitions of $n$ is denoted by $p(n)$. Euler found the generating function of $p(n)$, given by
\[
\sum_{n\geq 0}p(n)q^n=\frac{1}{(q;q)_\infty},
\]
where
\[
(a;q)_\infty:=\prod_{i\geq 0}(1-aq^i), \quad |q|<1.
\]
Thereafter, the subject got a massive push when Ramanujan found some amazing congruences that the partition function $p(n)$ satisfies; this ushered in an era of study of arithmetic properties of the partition function as well as other restricted types of partitions. Ramanujan proved the following identities
\[
p(5n+4)\equiv 0 \pmod 5, \quad p(7n+5)\equiv 0 \pmod 7, \quad \text{and}\quad p(11n+6) \equiv 0 \pmod{11}.
\]
Over the years various authors have studied different classes of partitions in the hope of proving such Ramanujan-type congruences for newer partition types. In this paper we look at the class of partitions called $k$-tuple partitions with $t$-cores and prove some Ramanujan-type congruences that these partitions satisfy.

The Ferrers-Young diagram of a partition is a pattern of dots with $\lambda_i$ dots in the $i$-th row for the partition $\lambda=(\lambda_1, \lambda_2, \ldots, \lambda_k)$ of $n$. The hook number of a dot is the number of dots directly below and to the right of the dot, including the dot itself. A partition is called a $t$-core partition for $t\geq 2$ if the partition has no hook number divisible by $t$.  We further define a partition $k$-tuple of $n$ to be the $k$-tuple of partitions $(\Lambda_1, \Lambda_2, \ldots, \Lambda_k)$, where $\sum\limits_{i=1}^k \Lambda_i=n$. Let $A_{t,k}(n)$ be the number of partition $k$-tuples of $n$ with $t$-cores and its generating function is given by
\begin{equation}\label{1}
    \sum_{n\geq 0}A_{t,k}(n)q^n=\frac{(q^t;q^t)^{kt}_\infty}{(q;q)^k_\infty}.
\end{equation}
These partitions and the generating function are the objects of study in this paper.

There have been several studies involving the congruence properties of $A_{t,k}(n)$. Dasappa \cite{Dasappa} proved the following result
\begin{equation}\label{eq:d}
    A_{5,2}(5^\alpha n+5^\alpha -2)\equiv 0 \pmod{5^\alpha}, \quad \alpha\geq 1.
\end{equation}
In a similar vein, recently Majid and Fathima \cite{MajidFathima} proved the following result
\begin{equation}\label{eq:f}
       A_{5,3}(5^\alpha n+5^\alpha -3)\equiv 0 \pmod{5^\alpha}, \quad \alpha\geq 1. 
\end{equation}
Both of these results were proved using elementary techniques which involved using dissection formulas and induction. We extend these results in the following theorem.
\begin{theorem}\label{thm-3}
For all $n\geq 0$ and $\alpha\geq 1$, we have
\begin{equation}
A_{5,4}(5^{\alpha+1} n+5^{\alpha+1} -4)\equiv 0 \pmod{5^{\alpha+4}}.
\end{equation}
\end{theorem}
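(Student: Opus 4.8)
The plan is to recast the theorem as an inductive statement about a single sequence of ``level series'' and to prove the induction by iterating a $5$-dissection. For $m \geq 1$ set
\[
\mathcal{A}_m(q) := \sum_{n\geq 0} A_{5,4}\!\left(5^m n + 5^m - 4\right) q^n ,
\]
so that $\mathcal{A}_1(q)=\sum_{n\ge0}A_{5,4}(5n+1)q^n$ is obtained from $F(q):=\sum_{n\ge0}A_{5,4}(n)q^n=(q^5;q^5)_\infty^{20}/(q;q)_\infty^{4}$ by extracting the terms with $n\equiv1\pmod5$. A short computation with the exponents shows that, for every $m\ge1$, passing from level $m$ to level $m+1$ amounts to extracting the subprogression $n\equiv4\pmod5$ from $\mathcal{A}_m$ and reindexing. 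In this language Theorem~\ref{thm-3} is exactly the assertion that $\mathcal{A}_m(q)\equiv0\pmod{5^{m+3}}$ for all $m\ge2$, and the argument splits into a base case ($m=2$, modulus $5^5$) together with an induction step that gains one further power of $5$ each time the operator ``extract $n\equiv4\pmod5$'' is applied.

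For the dissection machinery I would start from Ramanujan's $5$-dissection of $1/(q;q)_\infty$, in which the coefficient attached to $q^{4}$ carries the celebrated explicit factor of $5$, and raise it to the fourth power to obtain the $5$-dissection of $1/(q;q)_\infty^{4}$; multiplying by $(q^5;q^5)_\infty^{20}$ (which only feeds powers $q^{5j}$) then yields the $5$-dissection of $F$. The cleanest bookkeeping is to introduce the level-$5$ modular quotient $t:=q\,(q^5;q^5)_\infty^{6}/(q;q)_\infty^{6}$ and to show that, after dividing by a suitable fixed eta-quotient, each $\mathcal{A}_m$ becomes a Laurent polynomial in $t$ whose degree range is bounded uniformly in $m$. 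The extraction operator then acts on the finite-dimensional span of the relevant powers of $t$ by a fixed transition matrix $M$, so the whole problem is reduced to controlling the $5$-adic valuations of $M$ and of its iterates.

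The heart of the proof---and the step I expect to be the main obstacle---is to prove that this operator maps the relevant span into $5$ times itself, so that each application contributes exactly one factor of $5$. Concretely I would establish, for each basis power $t^{j}$, an identity expressing the $n\equiv4$ part of $t^{j}$ (times the fixed eta-quotient) as $5$ times an explicit combination of the $t^{i}$, using the $q^{4}$-coefficient ``$5$'' of Ramanujan's dissection together with the congruences satisfied by the remaining coefficients. Feeding this into the induction immediately upgrades the modulus from $5^{m+3}$ to $5^{m+4}$; the verification that no valuation is lost in the reindexing is the delicate part, and it is where the fourth power $k=4$ (as opposed to $k=2,3$) produces the extra divisibility responsible for the stronger modulus.

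Finally, for the base case I would compute the two dissection steps $F\mapsto\mathcal{A}_1\mapsto\mathcal{A}_2$ explicitly, tracking the powers of $5$ at each stage; here the four-fold convolution forces the ``$5q^4$'' term of Ramanujan's identity to contribute several times, producing the three extra powers of $5$ beyond the naive count and establishing $A_{5,4}(25n+21)\equiv0\pmod{5^5}$. Combined with the induction step this gives $\mathcal{A}_{\alpha+1}(q)\equiv0\pmod{5^{\alpha+4}}$ for all $\alpha\ge1$, which is the statement of the theorem.
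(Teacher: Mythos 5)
Your framework is essentially the paper's: iterate the $5$-dissection, observe that every level series lies in the span of the four functions $f_5^2f_1^{14}\,t^{j}$, $j=0,1,2,3$, with $t=qf_5^{6}/f_1^{6}$ (these are exactly $f_5^2f_1^{14}$, $qf_5^{8}f_1^{8}$, $q^2f_5^{14}f_1^{2}$ and $q^3f_5^{20}/f_1^{4}$, the last being the full generating function), and reduce the theorem to controlling $5$-adic valuations under a fixed $4\times4$ transition matrix; your base case $A_{5,4}(25n+21)\equiv 0\pmod{5^5}$ is also the one the paper uses.

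However, the step you yourself flag as the heart of the argument --- that the extraction operator maps this span into $5$ times itself, so that each application contributes one factor of $5$ --- is false, and the induction as designed does not close. The $q^{5n+4}$-part of $q^2f_5^{14}f_1^{2}$ is exactly $-f_5^{2}f_1^{14}$ (the paper's Lemma \ref{lem3}): a unit multiple of a basis element, with no factor of $5$. Similarly, the $q^{5n+4}$-part of $q^3f_5^{20}/f_1^{4}$ reproduces $\sum_{n\geq 0}A_{5,4}(5n+1)q^n$, whose $f_5^2f_1^{14}$-coefficient is $4$. So two of the four columns of the transition matrix contain $5$-adic units, and the identity you propose to establish, ``extraction of $t^{j}$ equals $5$ times an integral combination of the $t^{i}$,'' simply does not exist for $j=2,3$. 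What actually produces the growth in the exponent is a \emph{staggered} system of valuations: writing the level-$(\alpha+1)$ series as $A_\alpha f_5^2f_1^{14}+B_\alpha qf_5^8f_1^8+C_\alpha q^2f_5^{14}f_1^2+D_\alpha q^3f_5^{20}/f_1^{4}$, the recurrences
\[
A_\alpha=-C_{\alpha-1}+4D_{\alpha-1},\quad B_\alpha=-125B_{\alpha-1}+550D_{\alpha-1},\quad C_\alpha=-15625A_{\alpha-1}+12500D_{\alpha-1},\quad D_\alpha=78125D_{\alpha-1}
\]
propagate the hypothesis that $C_{\alpha-1}$ and $D_{\alpha-1}$ carry strictly more powers of $5$ than $A_{\alpha-1}$; the unit entry $-1$ is harmless only because it feeds the already-more-divisible coefficient $C_{\alpha-1}$ into the $A_\alpha$ slot. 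Your proposal contains no mechanism for establishing or maintaining these unequal valuations (a naive diagonal rescaling of the basis that would make the matrix divisible by $5$ destroys the integrality of the initial coefficient vector, since the $t^0$-coefficient of $\mathcal{A}_1$ is $4$). To repair the argument you must replace the single uniform divisibility claim by this vector-valued bookkeeping: prove the exact self-reproducing identity for the level series and then induct on the four valuations separately.
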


 We further find some new infinite family of congruences for $A_{t,k}(n)$ for some general values of $k$ and $t$, as stated in the following results.
 
\begin{theorem}\label{genthm1}
Let $p\geq 5$ be a prime and let $r\in \mathbb{N}$ with $1\leq r\leq p-1$, be such that $24r+1$
is a quadratic nonresidue modulo $p$. Then, for all $n\geq0$, $i \geq1$ and $N\geq1$, we have
\begin{align*}
  A_{p,p^Ni-1}(pn + r)\equiv0 \pmod {p^{N}}.
\end{align*}
\end{theorem}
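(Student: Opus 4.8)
The plan is to reduce the generating function \eqref{1} modulo $p^N$ to a single copy of $(q;q)_\infty$ multiplied by a power series in $q^p$, and then to exploit Euler's pentagonal number theorem together with the hypothesis on $r$. The algebraic engine is the standard lifting lemma: if $A,B\in\mathbb{Z}[[q]]$ satisfy $A\equiv B\pmod{p^a}$, then $A^p\equiv B^p\pmod{p^{a+1}}$ (expand $(B+p^aC)^p$ by the binomial theorem and check that each term is divisible by $p^{a+1}$). Starting from the Frobenius congruence $(q;q)_\infty^p\equiv(q^p;q^p)_\infty\pmod p$ and iterating $N-1$ times, I would first establish
\begin{equation*}
(q;q)_\infty^{p^N}\equiv(q^p;q^p)_\infty^{p^{N-1}}\pmod{p^N}.
\end{equation*}

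Next I would use the defining identity with $t=p$ and $k=p^Ni-1$. Since $-k=1-p^Ni$, I can peel off exactly one factor of $(q;q)_\infty$,
\begin{equation*}
\sum_{n\geq0}A_{p,p^Ni-1}(n)q^n=(q^p;q^p)_\infty^{kp}\,(q;q)_\infty\,(q;q)_\infty^{-p^Ni}.
\end{equation*}
Because a power series with constant term $1$ is a unit in $(\mathbb{Z}/p^N\mathbb{Z})[[q]]$, the congruence above may be inverted, giving $(q;q)_\infty^{-p^Ni}\equiv(q^p;q^p)_\infty^{-p^{N-1}i}\pmod{p^N}$. Substituting, the whole generating function becomes
\begin{equation*}
\sum_{n\geq0}A_{p,p^Ni-1}(n)q^n\equiv(q;q)_\infty\,(q^p;q^p)_\infty^{M}\pmod{p^N},\qquad M=kp-p^{N-1}i=p^{N-1}i(p^2-1)-p>0.
\end{equation*}
The crucial structural point is that $(q^p;q^p)_\infty^{M}$ is a power series in $q^p$, so extracting the coefficient of $q^{pn+r}$ forces the entire contribution to come from coefficients of $(q;q)_\infty$ at exponents congruent to $r$ modulo $p$.

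It then remains to show that $(q;q)_\infty$ has no term $q^m$ with $m\equiv r\pmod p$. Here I would invoke Euler's pentagonal number theorem, $(q;q)_\infty=\sum_{j=-\infty}^{\infty}(-1)^jq^{j(3j-1)/2}$, so that the only exponents appearing are the generalized pentagonal numbers $g_j=j(3j-1)/2$. Since $p\geq 5$, the integers $2,3,6$ are invertible modulo $p$, and completing the square after multiplying by $24$ yields $24g_j+1=(6j-1)^2$; hence $g_j\equiv r\pmod p$ would force $(6j-1)^2\equiv24r+1\pmod p$, making $24r+1$ a quadratic residue (or zero) modulo $p$. This contradicts the hypothesis that $24r+1$ is a quadratic nonresidue, so no such term exists and the coefficient of $q^{pn+r}$ in $(q;q)_\infty(q^p;q^p)_\infty^M$ is identically $0$. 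Reading this back through the displayed congruence gives $A_{p,p^Ni-1}(pn+r)\equiv0\pmod{p^N}$.

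I expect the main obstacle to be less the algebra than the bookkeeping that isolates exactly one factor of $(q;q)_\infty$: the choice $k=p^Ni-1$ is precisely what makes $-k\equiv1\pmod{p^N}$, and one must justify inverting power series modulo $p^N$ and confirm the positivity of $M$. The genuinely arithmetic heart of the argument is the reduction, via completing the square, of the vanishing to the quadratic-residue condition on $24r+1$; this is both where the hypothesis $p\geq5$ is used and where it cleanly explains the shape of the excluded residues $r$.
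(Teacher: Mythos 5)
Your proposal is correct and follows essentially the same route as the paper: both reduce the generating function modulo $p^N$ via $f_1^{p^N}\equiv f_p^{p^{N-1}}\pmod{p^N}$ so that a single factor of $(q;q)_\infty$ remains times a series in $q^p$, and then apply the pentagonal number theorem and complete the square to derive the contradiction with $24r+1$ being a quadratic nonresidue. Your added care about inverting unit power series modulo $p^N$ and checking $M>0$ is a harmless refinement of the same argument.
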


\begin{theorem}\label{genthm2}
Let $p\geq 5$ be a prime and let $r\in \mathbb{N}$ with $1\leq r\leq p-1$, be such that $8r +1$
is a quadratic nonresidue modulo $p$. Then, for all $n\geq0$, $i \geq1$ and $N\geq1$, we have
\begin{align*}
    A_{p,p^Ni-3}(pn + r)\equiv0 \pmod {p^{N}}.
\end{align*}
\end{theorem}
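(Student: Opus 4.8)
The plan is to follow the same strategy as in Theorem~\ref{genthm1}, with the role of Euler's pentagonal number theorem played instead by Jacobi's identity, since the residual factor here will be $(q;q)_\infty^{3}$ rather than $(q;q)_\infty$. First I would record the classical congruence
\[
(q;q)_\infty^{p^m}\equiv (q^p;q^p)_\infty^{p^{m-1}}\pmod{p^m},
\]
valid for every prime $p$ and $m\geq 1$, which follows by raising $(1-q^n)^p\equiv 1-q^{pn}\pmod p$ to successive $p$-th powers and lifting. Since both sides are power series with constant term $1$, they are invertible, and the congruence may be inverted to give $(q;q)_\infty^{-p^m}\equiv (q^p;q^p)_\infty^{-p^{m-1}}\pmod{p^m}$.

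Next I would substitute $t=p$ and $k=p^Ni-3$ into the generating function \eqref{1} and split off the exponent $-3$. Writing
\[
(q;q)_\infty^{-k}=(q;q)_\infty^{3}\,(q;q)_\infty^{-p^Ni}\quad\text{and}\quad (q^p;q^p)_\infty^{kp}=(q^p;q^p)_\infty^{p^{N+1}i}\,(q^p;q^p)_\infty^{-3p},
\]
and applying the inverted congruence with $m=N$ (after raising it to the $i$-th power) to the factor $(q;q)_\infty^{-p^Ni}$, I obtain, modulo $p^N$,
\[
\sum_{n\geq0}A_{p,p^Ni-3}(n)\,q^n\equiv (q;q)_\infty^{3}\,(q^p;q^p)_\infty^{\,p^{N-1}i(p^2-1)-3p}\pmod{p^N}.
\]
The crucial structural feature is that the second factor is a power series in $q^p$ alone; write it as $\sum_m b_m q^{pm}$ with $b_m\in\mathbb{Z}$.

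Then I would invoke Jacobi's identity $(q;q)_\infty^{3}=\sum_{j\geq0}(-1)^j(2j+1)q^{j(j+1)/2}$, so that the coefficient of $q^{pn+r}$ on the right-hand side is a sum over pairs $(j,m)$ with $\tfrac{j(j+1)}{2}+pm=pn+r$. Any such pair forces $\tfrac{j(j+1)}{2}\equiv r\pmod p$, whence $8\cdot\tfrac{j(j+1)}{2}+1=(2j+1)^2\equiv 8r+1\pmod p$. Since $8r+1$ is assumed to be a quadratic nonresidue modulo $p$, it cannot equal a square, so no admissible $j$ exists, the sum is empty, and $A_{p,p^Ni-3}(pn+r)\equiv0\pmod{p^N}$, as desired.

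The only place requiring genuine care is the bookkeeping of exponents when combining the powers of $(q^p;q^p)_\infty$, together with the justification that the residual factor is honestly a series in $q^p$ so that the triangular-number obstruction applies verbatim. The arithmetic obstruction itself is immediate once Jacobi's identity is in hand, so I expect the main (though essentially routine) obstacle to be confirming that the reduction holds precisely modulo $p^N$ and not a smaller power.
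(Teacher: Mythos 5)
Your proposal is correct and follows essentially the same route as the paper: reduce $1/f_1^{p^Ni-3}$ to $f_1^3/f_p^{p^{N-1}i}$ via $f_1^{p^N}\equiv f_p^{p^{N-1}}\pmod{p^N}$, observe that the leftover eta-quotient is a series in $q^p$, and kill the coefficient of $q^{pn+r}$ using Jacobi's identity and the quadratic nonresidue hypothesis on $8r+1$. Your extra care about inverting the congruence and tracking the exact exponent of $(q^p;q^p)_\infty$ is sound but not needed beyond what the paper already does implicitly.
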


\begin{theorem}\label{genthm3}
Let $p\geq 5$ be a prime and let $r\in \mathbb{N}$ with $1\leq r\leq p-1$, be the unique value such that $8r +1\equiv 0 \pmod p$. Then, for all $n\geq 0$ and $i \geq 1$, we have
\begin{align*}
A_{p,pi-3}(pn + r)\equiv0 \pmod p.
\end{align*}
\end{theorem}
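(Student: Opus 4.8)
The plan is to analyze the generating function $\sum_n A_{p,pi-3}(n)q^n = (q^p;q^p)_\infty^{p(pi-3)}/(q;q)_\infty^{pi-3}$ and extract the coefficients in the arithmetic progression $pn+r$ where $8r+1\equiv 0\pmod p$. First I would reduce the numerator modulo $p$: since $p$ is prime, the Frobenius-type congruence $(q;q)_\infty^p\equiv(q^p;q^p)_\infty\pmod p$ gives $(q^p;q^p)_\infty^{p(pi-3)}\equiv(q;q)_\infty^{p^2(pi-3)}\pmod p$ (applying the congruence iteratively, or more directly noting $(q^p;q^p)_\infty^p\equiv(q^{p^2};q^{p^2})_\infty$). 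The cleaner route is to write the generating function modulo $p$ as $(q^p;q^p)_\infty^{p\cdot pi}\,(q^p;q^p)_\infty^{-3p}/(q;q)_\infty^{pi-3}$ and use $(q^p;q^p)_\infty^{pi}\equiv(q;q)_\infty^{p\cdot pi}\pmod p$ to cancel against $(q;q)_\infty^{pi}$ coming from $(q;q)_\infty^{pi-3}$, leaving a manageable power. The net effect should be that, modulo $p$, the series reduces to something governed by $(q;q)_\infty^{3}$ or $(q^p;q^p)_\infty^{-3}$ times a pure power series in $q^p$, so that only the residue of the $(q;q)_\infty^{-3}$ (equivalently $(q;q)_\infty^{3}$ after inversion) factor controls which coefficients survive mod $p$.

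The key arithmetic input should be Jacobi's identity $(q;q)_\infty^3=\sum_{m\geq 0}(-1)^m(2m+1)q^{m(m+1)/2}$, whose exponents are the triangular numbers $m(m+1)/2$. After the reduction the surviving exponents of $q$ in the relevant factor lie in the set $\{m(m+1)/2\}$, and completing the square gives $8\cdot\frac{m(m+1)}{2}+1=(2m+1)^2$. Therefore the exponent $e$ of $q$ in this controlling factor satisfies $8e+1\equiv(2m+1)^2\pmod p$, i.e. $8e+1$ is a perfect square modulo $p$. I would then track how the overall exponent $pn+r$ decomposes: the pure-$q^p$ part contributes a multiple of $p$, so modulo $p$ the residue class of the total exponent is determined by $e$, forcing $e\equiv r\pmod p$ for a nonzero contribution. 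Combining these, a nonzero coefficient would require $8r+1\equiv 8e+1\equiv(2m+1)^2\pmod p$ to be a quadratic residue (or zero) modulo $p$.

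The decisive step is the hypothesis $8r+1\equiv 0\pmod p$, which is exactly the boundary case: here $8r+1$ is neither a nonzero quadratic residue nor a nonresidue but $0$, so one must check whether $(2m+1)^2\equiv 0\pmod p$ can occur, i.e. whether $2m+1\equiv 0\pmod p$ is compatible with the triangular exponent landing in the class $r$ mod $p$. The point is that when $p\mid(2m+1)$, the Jacobi coefficient $(-1)^m(2m+1)$ is itself $\equiv 0\pmod p$, so those terms vanish modulo $p$ regardless. Thus every term that could contribute to the progression $pn+r$ has its coefficient killed, either because $8e+1$ fails to be a square in the appropriate class, or because the only surviving square $0$ forces $p\mid(2m+1)$ and hence a vanishing Jacobi coefficient.

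The main obstacle will be carefully managing the bookkeeping in the first reduction step so that the generating function modulo $p$ genuinely factors as a series in $q^p$ times a single clean copy of the Jacobi-controlled factor; the exponent arithmetic on $pi-3$ (as opposed to the $p^Ni-3$ of Theorem~\ref{genthm2}) must be handled so that exactly the triangular-number structure of $(q;q)_\infty^3$ emerges and nothing else obstructs the residue argument. I would structure the write-up so that this reduction is isolated as the crux, after which the quadratic-residue dichotomy and the vanishing-coefficient observation finish the proof cleanly.
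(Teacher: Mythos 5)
Your proposal is correct and follows essentially the same route as the paper: reduce the generating function modulo $p$ via $f_1^{pi}\equiv f_p^{i}\pmod p$ so that it becomes a series in $q^p$ times $f_1^3$, apply Jacobi's identity \eqref{lem5}, complete the square to get $(2m+1)^2\equiv 8r+1\equiv 0\pmod p$, and conclude that $p\mid(2m+1)$ kills the surviving coefficients. This is exactly the paper's argument (equation \eqref{pfgenthm2} with $N=1$ followed by the vanishing-coefficient observation), so no further comparison is needed.
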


We also prove some new individual congruences for $A_{5,t}(n)$ for some specific values of $t$.
\begin{theorem}\label{thm1}
For all $n\geq 0$, the following congruences are true
\begin{align}
A_{5,2}(25n+23)&\equiv 0 \pmod{25},\\
A_{5,2}(125n+123)&\equiv 0 \pmod{125},\\
A_{5,3}(25n+22)&\equiv 0 \pmod{5},\\
A_{5,3}(125n+122)&\equiv 0 \pmod{25},\\
\label{eq:1}A_{5,4}(25n+21)&\equiv 0 \pmod{3125},\\
\label{eq:2}A_{5,4}(125n+121)&\equiv 0 \pmod{15625}.
\end{align}
\end{theorem}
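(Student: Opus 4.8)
The plan is to treat the six congruences as specializations of the three families already established in the excerpt, supplemented by a direct $5$-dissection argument whenever a self-contained derivation of the exact power of $5$ is wanted. First I would record the immediate reductions. Writing the residue of Theorem~\ref{thm-3} as $5^{\alpha+1}-4$ and its modulus as $5^{\alpha+4}$, the cases $\alpha=1$ and $\alpha=2$ give exactly $5^2-4=21$, $5^3-4=121$ and moduli $5^5=3125$, $5^6=15625$; hence \eqref{eq:1} and \eqref{eq:2} are precisely Theorem~\ref{thm-3} at $\alpha=1,2$. Likewise \eqref{eq:d} at $\alpha=2,3$ yields $A_{5,2}(25n+23)\equiv 0\pmod{25}$ and $A_{5,2}(125n+123)\equiv 0\pmod{125}$, while \eqref{eq:f} at $\alpha=2,3$ yields $A_{5,3}(25n+22)\equiv 0\pmod{25}$ and $A_{5,3}(125n+122)\equiv 0\pmod{125}$, both of which are in fact stronger than the asserted congruences modulo $5$ and $25$. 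Thus, modulo citing the earlier results, every line of the theorem follows, with the two $A_{5,3}$ statements holding \emph{a fortiori}.

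Second, and this is likely the route intended for a genuinely self-contained argument, I would reprove each line directly from \eqref{1}, namely $\sum_n A_{5,k}(n)q^n=(q^5;q^5)^{5k}_\infty/(q;q)^k_\infty$ for $k=2,3,4$. The engine is the classical Rogers--Ramanujan $5$-dissection of $1/(q;q)_\infty$, which expresses it as $(q^{25};q^{25})^5_\infty/(q^5;q^5)^6_\infty$ times an explicit Laurent polynomial in $R:=R(q^5)$, where $R(q)=\frac{(q^2;q^5)_\infty (q^3;q^5)_\infty}{(q;q^5)_\infty (q^4;q^5)_\infty}$ is a normalization of the Rogers--Ramanujan continued fraction; the crucial feature is the isolated coefficient $5$ attached to the $q^4$-term. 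Raising this to the $k$-th power, multiplying by $(q^5;q^5)^{5k}_\infty$, and collecting the terms in the residue class fixed by the target progression, I would extract $\sum_n A_{5,k}(5n+c)q^n$ as an eta-quotient times a polynomial in $R$ with controlled $5$-adic valuations. Iterating the dissection once more to reach the modulus-$25$ progressions and a third time for the modulus-$125$ progressions, and using the elementary congruence $(q;q)^5_\infty\equiv (q^5;q^5)_\infty\pmod 5$ together with its lifts $(q^a;q^a)^{5^j}_\infty\equiv (q^{5a};q^{5a})^{5^{j-1}}_\infty\pmod{5^j}$, I would read off the claimed power of $5$ from the accumulated factors.

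The main obstacle is bookkeeping the $5$-adic valuation through the iterated dissections. Each step both introduces a factor of $5$ from the $5q^4$ term and redistributes powers of $R$, and for $k=4$ the fourth power of the Laurent polynomial produces many cross terms; one must check that, after restricting to the correct residue class, the minimal valuation is exactly $5$ at level $25$ and $6$ at level $125$, not merely some smaller bound. I expect the cleanest control to come from setting up, as Dasappa and Majid--Fathima do, a pair of coupled recurrences relating the level-$5^{\alpha}$ and level-$5^{\alpha+1}$ dissection coefficients, and then running an induction on $\alpha$ that simultaneously tracks divisibility by the relevant power of $5$; specializing to $\alpha=2,3$ recovers all six congruences. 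The $k=4$ case is the delicate one, since the four extra powers of $5$ beyond the naive expectation, reflected in the exponent $5^{\alpha+4}$ of Theorem~\ref{thm-3}, must be explained by the full internal structure of the Laurent polynomial in $R$ and not by the single factor $5q^4$ alone.
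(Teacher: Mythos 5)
Your proposal is correct, and your first paragraph by itself already constitutes a complete proof of Theorem \ref{thm1} by a route different from the paper's. You are right that every line is a specialization of a result available elsewhere: \eqref{eq:d} at $\alpha=2,3$ gives the two $A_{5,2}$ congruences exactly, \eqref{eq:f} at $\alpha=2,3$ gives the two $A_{5,3}$ congruences with room to spare, and Theorem \ref{thm-3} at $\alpha=1,2$ gives \eqref{eq:1} and \eqref{eq:2} exactly; since Theorem \ref{thm-3} is proved independently in Section \ref{sec3}, there is no circularity. The paper instead proves Theorem \ref{thm1} by direct computation: it derives the full generating functions of each arithmetic progression as explicit linear combinations of eta-quotients with the relevant power of $5$ factored out in front, e.g.\ $\sum_{n\geq0}A_{5,4}(25n+21)q^n=3125\bigl(96 f_1^{14} f_5^2+13728\, q f_1^8 f_5^8+312480\, q^2f_1^2 f_5^{14} +1953125\, q^3 f_5^{20}/f_1^4\bigr)$, obtained by exactly the iterated $5$-dissection you sketch in your second paragraph (dissecting $1/f_1$ and $f_1$, extracting the residue class, and simplifying with \eqref{rel. of f and R}). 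What the explicit computation buys is the identities themselves, which are reusable data and the same machinery that drives the recurrences behind Theorem \ref{thm-3}; what your reduction buys is brevity and a cleaner logical dependence. Two minor cautions: your $R(q)$ is the reciprocal of the paper's, so signs and exponents in the dissection formulas must be adjusted to match; and your second, self-contained route stops at ``read off the claimed power of $5$'' without verifying the $5$-adic valuations, which for $k=4$ is precisely the nontrivial content---so if you want that version you must carry the computation through as the paper does. Since your first paragraph already closes the proof, this is a presentational choice rather than a gap.
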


Next, we present a congruence result for $A_{t,k}(n)$ modulo powers of primes, which can be also viewed as an existence result for infinite family of congruences. 
\begin{theorem}\label{extthm1}
Let $p$ be a prime, $k\geq1$, $j\geq0$, $N\geq1$, $M\geq1$, and $r$ be integers
such that $1 \leq r \leq p^M -1$. If for all $n\geq0$,
\[
   A_{p,k}(p^Mn + r)\equiv0 \pmod{p^N},
\]
then for all $n\geq 0$ we have
\[
A_{p,p^{M+N-1}i+k}(p^Mn + r)\equiv0 \pmod{p^N}.
\]
\end{theorem}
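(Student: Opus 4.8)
The plan is to exploit the multiplicativity of the generating function \eqref{1} in the superscript $k$ and thereby reduce the whole statement to a single product-side congruence. Writing $k' := p^{M+N-1}i + k$ and splitting the exponents according to \eqref{1}, I would factor
\[
\sum_{n\geq 0} A_{p,k'}(n)\, q^n = \frac{(q^p;q^p)_\infty^{k'p}}{(q;q)_\infty^{k'}} = \left(\sum_{n\geq 0} A_{p,k}(n)\, q^n\right) G(q), \qquad G(q) := \frac{(q^p;q^p)_\infty^{p^{M+N}i}}{(q;q)_\infty^{p^{M+N-1}i}}.
\]
The entire theorem then rests on one claim: that the correction factor $G(q)$ is, modulo $p^N$, a power series in $q^{p^M}$.

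To establish this I would use the standard lemma $(q;q)_\infty^{p^s}\equiv (q^p;q^p)_\infty^{p^{s-1}}\pmod{p^s}$, which follows by lifting $(1-x)^p\equiv 1-x^p\pmod p$ through the binomial theorem and inducting on $s$. Iterating it $j$ times yields $(q;q)_\infty^{p^s}\equiv (q^{p^j};q^{p^j})_\infty^{p^{s-j}}\pmod{p^{s-j+1}}$, where each step costs exactly one power of $p$. Applying this with $j=M$ separately to the numerator (in the variable $q^p$, with $s=M+N$) and to the denominator (with $s=M+N-1$) gives, modulo $p^N$,
\[
(q^p;q^p)_\infty^{p^{M+N}i} \equiv (q^{p^{M+1}};q^{p^{M+1}})_\infty^{p^{N}i}, \qquad (q;q)_\infty^{p^{M+N-1}i} \equiv (q^{p^{M}};q^{p^{M}})_\infty^{p^{N-1}i}.
\]
Both right-hand sides are power series in $q^{p^M}$, and since the denominator has constant term $1$ its reciprocal is again an integer power series in $q^{p^M}$; hence $G(q)\equiv H(q^{p^M})\pmod{p^N}$ for some $H\in\mathbb{Z}[[x]]$.

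With this claim in hand I would finish by a convolution argument. Writing $G(q)\equiv \sum_m c_m q^{p^M m}\pmod{p^N}$ and extracting the coefficient of $q^{p^M n+r}$ from the factored product gives
\[
A_{p,k'}(p^M n + r) \equiv \sum_{m} c_m\, A_{p,k}\!\left(p^M(n-m)+r\right) \pmod{p^N}.
\]
For each index $m$ the inner argument is either negative (contributing $0$, using $1\le r\le p^M-1$, so that $p^M(n-m)+r<0$ whenever $m>n$) or of the form $p^M n''+r$ with $n''\ge 0$, which is $\equiv 0\pmod{p^N}$ by the hypothesis on $A_{p,k}$; thus every term vanishes modulo $p^N$, completing the argument. (Here $i$ ranges over nonnegative integers; $i=0$ is precisely the hypothesis.)

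I expect the main obstacle to be the modulus bookkeeping in the iteration. Because each application of the lemma loses one power of $p$, the exponents $p^{M+N}$ and $p^{M+N-1}$ must be calibrated so that after exactly $M$ iterations a surviving factor of $p^{N}$, respectively $p^{N-1}$, remains while the congruence is still valid modulo $p^N$; I would verify carefully that $M$ iterations land the base on $q^{p^{M}}$ (resp. $q^{p^{M+1}}$) rather than stopping short. A secondary technical point, easily dispatched, is that passing to the reciprocal of the denominator preserves the congruence modulo $p^N$, which holds because both series involved have constant term $1$.
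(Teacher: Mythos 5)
Your proof is correct and follows essentially the same route as the paper: both factor the generating function as $G(q)\sum_{n}A_{p,k}(n)q^n$ with $G(q)=f_p^{p^{M+N}i}/f_1^{p^{M+N-1}i}$ and iterate the congruence $f_1^{p^s}\equiv f_p^{p^{s-1}}\pmod{p^s}$ exactly $M$ times to see that the cofactor is supported on multiples of $p^M$ modulo $p^N$. The only difference is organizational --- the paper interleaves the reductions with digit-by-digit extractions along the base-$p$ expansion of $r$, whereas you reduce $G(q)$ to a series in $q^{p^M}$ up front and extract once --- and your exponent bookkeeping checks out.
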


The following is an easy corollary.
\begin{corollary}\label{cor3}
For all $i \geq 0$ and $n\geq0$, we have
\begin{align}
A_{5,25i+2}(25n+23)&\equiv 0 \pmod{5},\\
A_{5,125i+2}(25n+23)&\equiv 0 \pmod{25},\\
A_{5,3125i+2}(125n+123)&\equiv 0 \pmod{125},\\
A_{5,25i+3}(25n+22)&\equiv 0 \pmod{5},\\
A_{5,625i+3}(125n+122)&\equiv 0 \pmod{25},\\
A_{5,5i+4}(5n+3,4)&\equiv 0 \pmod{5},\\
\label{inf cong 1}A_{5,125i+4}(25n+21)&\equiv 0 \pmod{25},\\
A_{5,25i+4}(25n+21)&\equiv 0 \pmod{5},\\
A_{5,125i+4}(25n+21)&\equiv 0 \pmod{25},\\
A_{5,625i+4}(25n+21)&\equiv 0 \pmod{125},\\
A_{5,3125i+4}(25n+21)&\equiv 0 \pmod{625},\\
A_{5,15625i+4}(25n+21)&\equiv 0 \pmod{3125},\\
A_{5,390625i+4}(125n+121)&\equiv 0 \pmod{15625}.
\end{align}
\end{corollary}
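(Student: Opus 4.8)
The plan is to recognize that Corollary~\ref{cor3} is nothing more than a bookkeeping exercise built on top of Theorem~\ref{extthm1}, specialized to $p=5$. Theorem~\ref{extthm1} says that any \emph{seed} congruence of the shape $A_{5,k}(5^{M}n+r)\equiv 0\pmod{5^{N}}$ automatically propagates to the whole family $A_{5,\,5^{M+N-1}i+k}(5^{M}n+r)\equiv 0\pmod{5^{N}}$. So the first thing I would do is set up a uniform ``decoding'' rule: given a target line of the form $A_{5,\,5^{a}i+k}(5^{b}n+r)\equiv 0\pmod{5^{c}}$, I read off $N=c$ and $M=b$, and then I must check the single consistency condition $a=M+N-1=b+c-1$. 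If that holds, the corollary line follows from Theorem~\ref{extthm1} as soon as the required seed $A_{5,k}(5^{b}n+r)\equiv 0\pmod{5^{c}}$ is in hand.

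Next I would match each of the thirteen lines to its seed. The seeds for $k=2$ and $k=3$ come directly from Theorem~\ref{thm1}: for instance $A_{5,25i+2}(25n+23)$ and $A_{5,125i+2}(25n+23)$ both use $A_{5,2}(25n+23)$ (with $M=2$, and $N=1,2$ respectively), $A_{5,3125i+2}(125n+123)$ uses $A_{5,2}(125n+123)$ with $M=N=3$, and the $k=3$ lines use $A_{5,3}(25n+22)$ and $A_{5,3}(125n+122)$. The $k=4$ lines are driven by \eqref{eq:1} and \eqref{eq:2}: the block of congruences on $25n+21$ (with moduli $5,25,125,625,3125$) all read off the single seed $A_{5,4}(25n+21)\equiv 0\pmod{3125}$ with $M=2$ and $N=1,2,3,4,5$, while $A_{5,390625i+4}(125n+121)$ uses $A_{5,4}(125n+121)\equiv 0\pmod{15625}$ with $M=3$, $N=6$. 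In every case one verifies $a=b+c-1$ numerically; e.g. for $A_{5,15625i+4}(25n+21)\pmod{3125}$ one has $a=6$, $b=2$, $c=5$, and indeed $2+5-1=6$.

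The one subtlety worth flagging is that several seeds are available modulo a \emph{higher} power of $5$ than the corollary actually requires (Theorem~\ref{thm1} gives mod $25$ where only mod $5$ is needed, and \eqref{eq:1} gives mod $3125$ for all the $25n+21$ lines). Here I would simply invoke the trivial implication that $x\equiv 0 \pmod{5^{a}}$ forces $x\equiv 0 \pmod{5^{c}}$ for every $c\le a$, so the stronger seed is weakened to exactly the $N$ dictated by the decoding rule before applying Theorem~\ref{extthm1}. The only line needing a seed not literally displayed earlier is the $A_{5,4}$ congruence at level $M=1$, whose mod-$5$ base case is exactly the first step of the induction establishing Theorem~\ref{thm-3}; I would cite that step directly. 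Since Theorem~\ref{extthm1} is assumed, there is essentially no analytic obstacle here: the entire difficulty is clerical, namely tabulating $(k,M,N,r)$ for each line and confirming both the exponent identity $a=M+N-1$ and the availability of the corresponding seed.
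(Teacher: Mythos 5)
Your overall strategy is exactly the paper's: read each line as an instance of Theorem~\ref{extthm1} with $p=5$, identify $(k,M,N,r)$, check the exponent identity $M+N-1$ against the power of $5$ in the subscript, and feed in a seed congruence from Theorem~\ref{thm1} (weakening the modulus where necessary). Your bookkeeping for the twelve lines seeded by Theorem~\ref{thm1}, \eqref{eq:1} and \eqref{eq:2} checks out, and the paper's own proof is precisely this (it only works out \eqref{inf cong 1} explicitly).

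There is, however, one concrete error: your treatment of the line $A_{5,5i+4}(5n+3,4)\equiv 0\pmod 5$. You propose to take as seed ``the first step of the induction establishing Theorem~\ref{thm-3},'' i.e.\ equation \eqref{19}. That identity concerns the progression $5n+1$, not $5n+3$ or $5n+4$, and moreover its leading coefficient is $A_0=4$, so it does not even give a mod-$5$ congruence on $5n+1$ (which is why Theorem~\ref{thm-3} starts at $\alpha\geq 1$). The seeds you actually need, $A_{5,4}(5n+3)\equiv A_{5,4}(5n+4)\equiv 0\pmod 5$, come instead from Theorem~\ref{genthm1} with $p=5$, $N=1$, $i=1$ and $r\in\{3,4\}$: one checks $24\cdot 3+1=73\equiv 3$ and $24\cdot 4+1=97\equiv 2\pmod 5$, both quadratic nonresidues modulo $5$. (Indeed the entire corollary line is a direct instance of Theorem~\ref{genthm1}, with no need to route it through Theorem~\ref{extthm1} at all.) With that citation repaired, your argument is complete and coincides with the paper's.
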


\begin{proof}
 The proofs of the above congruences follow from Theorem \ref{thm1} and Theorem \ref{extthm1} and are similar in nature. Hence, here we only present the proof of \eqref{inf cong 1}.
 
 The case for $i=0$ is true by \eqref{eq:1}. Using Theorem \ref{extthm1}, and the case for $i=0$, we deduce that
\begin{align*}
   \sum_{n\geq0}A_{5,125i+4}(25n+21)q^n\equiv  \sum_{n\geq0}A_{5,4}(25n+21)q^n\equiv 0 \pmod{25},
\end{align*}
which completes the proof.
\end{proof}

The rest of the paper is organized as follows: in Section \ref{sec2} we state some preliminary results that we require for our proofs, Theorem \ref{thm-3} is then proved in Section \ref{sec3}, Theorems \ref{genthm1}, \ref{genthm2} and \ref{genthm3} are proved in Section \ref{sec4}, Theorem \ref{thm1} is proved in Section \ref{sec5}, Theorem \ref{extthm1} is proved in Section \ref{sec6} and finally we close the paper with some concluding remarks and conjectures in Section \ref{sec7}.

\section{Preliminary Results}\label{sec2}

For the sake of brevity, we use the notation $f_k:=(q^k;q^k)_\infty$ throughout the rest of the paper. We also let
\[
R(q)=\frac{(q;q^5)_\infty (q^4;q^5)_\infty}{(q^2;q^5)_\infty(q^3;q^5)_\infty}.
\]
First, we recall the following $5$-dissections of $\dfrac{1}{f_1}$ and $f_1$. We have \cite[Eq. (7.4.14)]{Spirit}
\begin{multline}\label{eq:bcb}
        \frac{1}{f_1}=\frac{f_{25}^5}{f_5^6}\Bigg(\frac{1}{R^4(q^5)}+\frac{q}{R^3(q^5)}+\frac{2q^2}{R^2(q^5)}+\frac{3q^3}{R(q^5)}+5q^4-3q^5R(q^5)+2q^6R^2(q^5) \\
        -q^7R^3(q^5)+q^8R^4(q^5)\Bigg),
    \end{multline}
    and \cite[Theorem 7.4.4]{Spirit}
\begin{equation}\label{f_1}
    f_1=f_{25}\left(\frac{1}{R(q^5)}-q-q^2R(q^5) \right).
\end{equation}

\begin{lemma}\label{lem}
Let $\sum\limits_{n\geq 0}P_4(n)q^n=\dfrac{1}{f_1^4}$. Then
we have \[\sum_{n\geq 0}P_4(5n+1)q^n = 4\frac{f_5^2}{f_1^6}+550q\frac{f_5^8}{f_1^{12}}+12500q^2\frac{f_5^{14}}{f_1^{18}}+78125q^3\frac{f_5^{20}}{f_1^{24}}.
\]
\end{lemma}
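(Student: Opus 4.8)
The plan is to read off the progression $5n+1$ directly from the $5$-dissection \eqref{eq:bcb}. Writing $R:=R(q^5)$ and abbreviating the bracket in \eqref{eq:bcb} as $T=\sum_{m=0}^{8}t_m q^m R^{\,m-4}$ with $(t_0,\dots,t_8)=(1,1,2,3,5,-3,2,-1,1)$, we have $1/f_1=(f_{25}^5/f_5^6)\,T$. Raising to the fourth power gives $1/f_1^{4}=(f_{25}^{20}/f_5^{24})\,T^4$, and on expanding $T^4=\sum_{S=0}^{32}u_S\,q^S R^{\,S-16}$ the coefficients $u_S=\sum_{m_1+\cdots+m_4=S}t_{m_1}t_{m_2}t_{m_3}t_{m_4}$ are the fourfold convolutions of the $t_m$. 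Since $f_{25}^{20}/f_5^{24}$ and every (positive or negative) power of $R=R(q^5)$ are series in $q^5$, the term $q^S R^{\,S-16}$ carries only exponents $\equiv S \pmod 5$; hence the coefficients of $q^{5n+1}$ arise exactly from $S\equiv 1\pmod 5$, i.e. $S\in\{1,6,11,16,21,26,31\}$.

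The decisive feature is that for precisely these $S$ the exponent $S-16\in\{-15,-10,-5,0,5,10,15\}$ is a multiple of $5$, so that only integral powers of $R(q^5)^5$ occur. Collecting these seven terms, dividing out $q$, and applying $q^5\mapsto q$ (under which $f_{25}\mapsto f_5$, $f_5\mapsto f_1$ and $R(q^5)\mapsto R(q)$), I would obtain, with $a:=R(q)^5$,
\[
\sum_{n\ge0}P_4(5n+1)q^n=\frac{f_5^{20}}{f_1^{24}}\Bigl(u_1 a^{-3}+u_6 q\,a^{-2}+u_{11}q^2 a^{-1}+u_{16}q^3+u_{21}q^4 a+u_{26}q^5 a^2+u_{31}q^6 a^3\Bigr).
\]

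To collapse this bracket I would use the classical relation for the Rogers--Ramanujan continued fraction, $\tfrac{1}{R(q)^5}-11q-q^2R(q)^5=f_1^6/f_5^6$. Setting $z:=f_1^6/f_5^6$ and $w:=z+11q$, this reads $1/a-q^2a=w$, i.e. the quadratic $q^2a^2+wa-1=0$, whose two roots $a,a'$ satisfy $a+a'=-w/q^2$, $aa'=-1/q^2$, and $a^{-k}=(-q^2)^k a'^{\,k}$. The coefficient symmetry $t_{8-m}=(-1)^m t_m$ of \eqref{eq:bcb} propagates to $u_{32-S}=(-1)^S u_S$, giving $u_{31}=-u_1$, $u_{26}=u_6$, $u_{21}=-u_{11}$; substituting the negative powers of $a$ then turns the bracket into $-u_1 q^6(a^3+a'^3)+u_6 q^5(a^2+a'^2)-u_{11}q^4(a+a')+u_{16}q^3$, which is symmetric in $a,a'$ and hence a polynomial in $w$. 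Expressing the power sums through $a+a'$ and $aa'$ collapses it to $u_1w^3+u_6 q\,w^2+(3u_1+u_{11})q^2 w+(2u_6+u_{16})q^3$; finally substituting $w=z+11q$ and multiplying by $f_5^{20}/f_1^{24}$, using $z^j f_5^{20}/f_1^{24}=f_5^{20-6j}/f_1^{24-6j}$, produces the four claimed terms.

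The only genuine labour --- and the step most exposed to arithmetic error --- is the bookkeeping of the convolution coefficients. A convenient route is to compute $T^2$ first, whose coefficient vector is $(1,2,5,10,20,16,27,20,15,-20,27,-16,20,-10,5,-2,1)$, and then to square it to reach $u_1=4$, $u_6=418$, $u_{11}=1840$, $u_{16}=1015$. With these values the intermediate polynomial becomes $4w^3+418q\,w^2+1852q^2w+1851q^3$, and expanding $w=z+11q$ gives coefficients $4$, $132+418=550$, $1452+9196+1852=12500$ and $5324+50578+20372+1851=78125$, matching the constants in the stated identity exactly.
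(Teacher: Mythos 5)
Your proposal is correct and follows essentially the same route as the paper: raise the $5$-dissection \eqref{eq:bcb} to the fourth power, extract the $q^{5n+1}$ terms to reach the intermediate identity \eqref{step-1} (your convolution values $u_1=4$, $u_6=418$, $u_{11}=1840$, $u_{16}=1015$ and the antisymmetry $u_{32-S}=(-1)^Su_S$ reproduce its coefficients exactly), and then collapse via \eqref{rel. of f and R}. Your treatment of the final reduction through the quadratic in $R(q)^5$ and symmetric functions is merely a more explicit account of the step the paper performs silently; the arithmetic checks out.
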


\begin{proof}
Using equation \eqref{eq:bcb}, extracting the terms involving $q^{5n+1}$, dividing by $q$ and then replacing $q^5$ by $q$, we arrive at
\begin{equation}\label{step-1}
    \sum_{n\geq 0}P_4(5n+1)q^n=\frac{f_{5}^{20}}{f_1^{24}}\Bigg(\frac{4}{R^{15}(q)}+\frac{418q}{R^{10}(q)}+\frac{1840q^{2}}{{R^5}(q)}+1015q^3-1840q^{4}R^5(q)+418q^{5}R^{10}(q)-4q^{6}R^{15}(q)\Bigg).
\end{equation}
We use the following formula \cite[Theorem 7.4.4]{Spirit}
\begin{equation}
    \label{rel. of f and R}\frac{1}{R(q)^5}-11q-q^2R(q)^5=\frac{f_1^6}{f_5^6},
\end{equation}
to obtain from equation \eqref{step-1}
\begin{equation}\label{16}
    \sum_{n\geq 0}P_4(5n+1)q^n=4\frac{f_5^2}{f_1^6}+550q\frac{f_5^8}{f_1^{12}}+12500q^2\frac{f_5^{14}}{f_1^{18}}+78125q^3\frac{f_5^{20}}{f_1^{24}}.
\end{equation}
\end{proof}

\begin{lemma}\label{lem1}
If $\sum\limits_{n\geq0} Q_{4}(n)q^n = f_{5}^2f_{1}^{14}$, then we have $\sum\limits_{n\geq0} Q_{4}(5n+4)q^n = -15625q^2f_{5}^{14}f_{1}^{2}$.  
\end{lemma}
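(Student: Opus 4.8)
The plan is to start from the $5$-dissection of $f_1$ in \eqref{f_1}, namely $f_1 = f_{25}\bigl(R(q^5)^{-1} - q - q^2 R(q^5)\bigr)$, and raise it to the fourteenth power. Writing $R := R(q^5)$ (a power series in $q^5$) and noting that $f_5^2 f_{25}^{14}$ is also a series in $q^5$ (since $f_{25}$ is a series in $q^{25}$), we obtain
\[
f_5^2 f_1^{14} = f_5^2 f_{25}^{14}\,\bigl(R^{-1} - q - q^2 R\bigr)^{14}.
\]
Because the prefactor $f_5^2 f_{25}^{14}$ is invariant under the dissection, it can be pulled outside, and the problem reduces to extracting the terms $q^{5n+4}$ from the pure power $\bigl(R^{-1} - q - q^2 R\bigr)^{14}$.

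Next I would expand this fourteenth power by the multinomial theorem. A term that selects the summand $-q$ exactly $a$ times and $-q^2 R$ exactly $b$ times (and $R^{-1}$ the remaining $14-a-b$ times) contributes
\[
\binom{14}{a,\,b,\,14-a-b}(-1)^{a+b}\,q^{a+2b}\,R^{\,a+2b-14}.
\]
The crucial observation is that the exponent of $R$ equals $(a+2b)-14$, so it is determined solely by $s := a+2b$, the base power of $q$. Since $R$ is a series in $q^5$, multiplying by $R^{\,s-14}$ shifts $q$-exponents only by multiples of $5$; hence a monomial survives the extraction of $q^{5n+4}$ exactly when $s \equiv 4 \pmod 5$, and for each such $s$ the accompanying power of $R$ automatically satisfies $s - 14 \equiv 0 \pmod 5$. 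Grouping by $s \in \{4,9,14,19,24\}$, the extracted part becomes
\[
\sum_{s\in\{4,9,14,19,24\}} c_s\, q^{s} R^{\,s-14}, \qquad c_s := \sum_{\substack{a+2b=s\\ a+b\le 14}} \binom{14}{a,\,b,\,14-a-b}(-1)^{a+b},
\]
where each $c_s$ is a single integer precisely because every admissible splitting of a fixed $s$ carries the same power of $R$.

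The heart of the proof is then the finite evaluation of these five coefficients, which is also the step I regard as the main obstacle, since it rests on delicate cancellation among multinomial coefficients rather than on any general principle. I expect to find that the four off-center sums vanish, $c_4 = c_9 = c_{19} = c_{24} = 0$, while the central one is $c_{14} = -15625$; the vanishing is exactly what prevents the stray powers $R^{\pm5}, R^{\pm10}$ from surviving, so that (pleasantly) the relation \eqref{rel. of f and R} is never actually needed. Granting these values, the extracted part of $\bigl(R^{-1}-q-q^2R\bigr)^{14}$ collapses to the single monomial $-15625\,q^{14}$, whence the extracted part of $f_5^2 f_1^{14}$ is $-15625\,q^{14} f_5^2 f_{25}^{14}$. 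Finally I would divide by $q^4$ and replace $q^5$ by $q$ (so that $f_5 \mapsto f_1$, $f_{25}\mapsto f_5$, and $q^{10}\mapsto q^2$), which yields $\sum_{n\ge 0} Q_4(5n+4)q^n = -15625\,q^2 f_5^{14} f_1^2$, as claimed.
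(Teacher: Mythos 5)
Your proposal is correct and follows essentially the same route as the paper: the paper likewise substitutes the $5$-dissection \eqref{f_1} into $f_5^2 f_1^{14}$, writes out the full multinomial expansion of $\bigl(R^{-1}-q-q^2R\bigr)^{14}$ (in which the coefficients of $q^4, q^9, q^{19}, q^{24}$ indeed vanish and that of $q^{14}$ is $-15625$), and then extracts the $q^{5n+4}$ terms without ever invoking \eqref{rel. of f and R}. Your observation that the power of $R$ is determined by $s=a+2b$ is just a cleaner way of organizing the same finite computation, and your predicted values $c_4=c_9=c_{19}=c_{24}=0$, $c_{14}=-15625$ check out.
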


\begin{proof}
Using equation \eqref{f_1}, we obtain
\begin{multline*}
\sum_{n\geq 0} Q_{4}(n)q^n = f_{5}^2f_{25}^{14}(1/R^{14}(q) - 14 q/R^{13}(q) + 77 q^2/R^{12}(q) - 182 q^3/R^{11}(q) + 910 q^5/R^9(q) \\- 1365 q^6/R^8(q) - 1430 q^7/R^7(q) + 5005 q^8/R^6(q) - 10010 q^{10}/R^4(q) \\ + 3640 q^{11}/R^3(q)  + 14105 q^{12}/R^2(q) - 6930 q^{13}/R(q) -15625 q^{14}\\ + 6930 q^{15} R(q) + 14105 q^{16} R^2(q) - 3640 q^{17} R^3(q) - 10010 q^{18} R^4(q)\\ + 5005 q^{20} R^6(q) + 1430 q^{21} R^7(q) -1365 q^{22} R^8(q) - 910 q^{23} R^9(q) \\+ 182 q^{25} R^{11}(q) + 77 q^{26} R^{12}(q) + 14 q^{27} R^{13}(q) + q^{28} R^{14}(q)).
\end{multline*}
Extracting the terms involving $q^{5n+4}$ and then dividing by $q^4$ and replacing $q^5$ by $q$, we get
\begin{align*}
 \sum_{n\geq0} Q_{4}(5n+4)q^n = -15625q^2f_{5}^{14}f_{1}^{2}. 
\end{align*}
\end{proof}

\begin{lemma}\label{lem2}
If $\sum\limits_{n\geq0} Q_{5}(n)q^n = q f_5^{8}f_1^{8}$, then we have $\sum\limits_{n\geq0} Q_{5}(5n+4)q^n = -125q f_{5}^{8}f_{1}^{8}$.  
\end{lemma}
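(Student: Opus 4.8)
The plan is to follow the template of Lemma~\ref{lem1}: insert the $5$-dissection \eqref{f_1} into $f_1^8$, expand the eighth power, and extract the progression $5n+4$. Setting $R:=R(q^5)$ and using \eqref{f_1}, I would first write
\[
q f_5^8 f_1^8 = q\,f_5^8 f_{25}^8\left(\frac{1}{R}-q-q^2R\right)^8 .
\]
Because $f_5$, $f_{25}$ and $R=R(q^5)$ are all power series in $q^5$, the only source of exponents that are not multiples of $5$ is the explicit monomial part of the bracket; hence the residue modulo $5$ of each term is governed entirely by the multinomial expansion
\[
\left(\frac{1}{R}-q-q^2R\right)^8=\sum_{a+b+c=8}\binom{8}{a,b,c}(-1)^{b+c}q^{b+2c}R^{\,c-a}.
\]

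Since there is a leading factor $q$, isolating $q^{5n+4}$ amounts to retaining exactly the triples with $b+2c\equiv3\pmod 5$, i.e.\ $b+2c\in\{3,8,13\}$ in the admissible range $0\le b+2c\le16$. Noting that $c-a=b+2c-8$, these three values force the $R$-powers $R^{-5}$, $R^{0}$, $R^{5}$ respectively. The key observation I would establish is that the two outer blocks cancel: for $b+2c=3$ the triples $(a,b,c)=(5,3,0)$ and $(6,1,1)$ both give $R^{-5}q^3$ with signed coefficients $-56$ and $+56$, and for $b+2c=13$ the triples $(0,3,5)$ and $(1,1,6)$ both give $R^{5}q^{13}$ with signed coefficients $+56$ and $-56$. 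Only the central block $b+2c=8$ survives, contributing $R^{0}q^8$ with total coefficient
\[
1-56+420-560+70=-125 .
\]

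Consequently the entire $5n+4$ part of $q f_5^8 f_1^8$ collapses to $-125\,q^9 f_5^8 f_{25}^8$; dividing by $q^4$ and replacing $q^5$ by $q$ then gives $-125\,q f_5^8 f_1^8$, which is the claim. I expect the only real obstacle to be the signed multinomial bookkeeping---one must verify that the $R^{\pm5}$ terms vanish \emph{exactly}, rather than merely modulo some power of $5$, so that the clean self-similar shape emerges. Should any off-central $R$-power have survived (as happens for other residues), the remedy is already at hand: apply the relation \eqref{rel. of f and R} with $q\mapsto q^5$ to rewrite the surviving $R^{\pm5}$ in terms of $f_5^6/f_{25}^6$, after which the same extraction goes through.
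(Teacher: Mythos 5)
Your proposal is correct and follows exactly the route the paper intends: the paper omits this proof, stating it is ``exactly similar'' to that of Lemma~\ref{lem1}, namely substituting the $5$-dissection \eqref{f_1} into $f_1^8$, expanding, and extracting the $q^{5n+4}$ terms. Your multinomial bookkeeping checks out --- the $R^{\pm 5}$ blocks cancel identically and the central block sums to $-125$ --- so this is a complete (indeed more systematically organized) version of the paper's argument.
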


\begin{lemma}\label{lem3}
If $\sum\limits_{n\geq0} Q_{6}(n)q^n = q^2 f_5^{14}f_1^{2}$, then we have $\sum\limits_{n\geq0} Q_{6}(5n+4)q^n = -f_{5}^{2}f_{1}^{14}$.  
\end{lemma}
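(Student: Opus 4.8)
The plan is to follow exactly the template used for Lemma~\ref{lem1}, except that the only power-series work now required is a $5$-dissection of $f_1^2$ (rather than $f_1^{14}$), which is short enough to carry out entirely by hand. First I would substitute the $5$-dissection \eqref{f_1} into $f_1^2$, writing, with the abbreviation $R := R(q^5)$,
\[
f_1^2 = f_{25}^2\left(\frac{1}{R} - q - q^2 R\right)^2,
\]
and then multiply through by $q^2 f_5^{14}$ so that $\sum_{n\geq 0} Q_6(n) q^n = q^2 f_5^{14} f_{25}^2 \left(\frac{1}{R} - q - q^2 R\right)^2$.

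The key observation is that $f_5$ and $f_{25}$ are power series in $q^5$ (indeed $f_{25}$ is a series in $q^{25}$), and $R = R(q^5)$ is likewise a series in $q^5$; hence the residue modulo $5$ of every monomial in the product is controlled entirely by the explicit prefactor $q^2$ together with the bare powers of $q$ produced by squaring the three-term factor. Expanding,
\[
q^2\left(\frac{1}{R} - q - q^2 R\right)^2 = \frac{q^2}{R^2} - \frac{2q^3}{R} - q^4 + 2q^5 R + q^6 R^2,
\]
so the bare exponents are $2,3,4,5,6$, which hit the residues $2,3,4,0,1$ modulo $5$, each exactly once. Therefore, extracting the terms with exponent $\equiv 4 \pmod 5$ annihilates every summand except the middle one, $-q^4$, which carries no factor of $R$ whatsoever.

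Collecting what survives, the $q^{5n+4}$ part of $\sum Q_6(n) q^n$ equals $-q^4 f_5^{14} f_{25}^2$. Dividing by $q^4$ and replacing $q^5$ by $q$ sends $f_5^{14} \mapsto f_1^{14}$ and $f_{25}^2 \mapsto f_5^2$, which yields
\[
\sum_{n\geq 0} Q_6(5n+4) q^n = -f_5^2 f_1^{14},
\]
as claimed. There is essentially no serious obstacle here: the statement is the exact reverse of Lemma~\ref{lem1} (which carries $f_5^2 f_1^{14}$ to a multiple of $q^2 f_5^{14} f_1^2$), and the only thing demanding care is the bookkeeping of residues in the three-term square — made painless by the fact that the expansion has just five terms landing in five distinct residue classes. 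The sole point requiring a moment's attention is confirming that the surviving $-q^4$ term is genuinely $R$-free, so that no further dissection of $R(q^5)$ is needed to finish.
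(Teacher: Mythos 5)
Your proposal is correct and follows exactly the method the paper indicates for this lemma (the paper omits the details, stating the proof is ``exactly similar'' to that of Lemma \ref{lem1}, which likewise substitutes the $5$-dissection \eqref{f_1} and extracts the $q^{5n+4}$ terms). Your expansion $q^2\left(\frac{1}{R}-q-q^2R\right)^2=\frac{q^2}{R^2}-\frac{2q^3}{R}-q^4+2q^5R+q^6R^2$ and the observation that only the $R$-free term $-q^4$ lies in the residue class $4 \pmod 5$ are both accurate, and they yield the stated $-f_5^2 f_1^{14}$.
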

\noindent The proofs of Lemma \ref{lem2} and Lemma \ref{lem3} are exactly similar to the proof of Lemma \ref{lem1}. So, we leave them to the reader.

We also recall that 
\begin{align}
  \label{lem4}  f_1 =& \sum_{m=-\infty}^{\infty}
(-1)^mq^{m(3m-1)/2},
\end{align}
and \cite[p.39, Entry 24(iii)]{BerndtIII}
\begin{align}
  \label{lem5}  f_1^3 =& \sum_{m\geq 0}
(-1)^{m}(2m + 1)q^{m(m+1)/2}.
\end{align}
We know that for all primes $p$ and integers $k \geq 1$, we have
\begin{align}
 \label{lem6}   f_1^{p^k}\equiv f_p^{p^{k-1}}\pmod {p^k}.
\end{align}

\section{Proof of Theorem \ref{thm-3}}\label{sec3}

We prove Theorem \ref{thm-3} using elementary $q$-series techniques, remniscent of the proof of the result of Majid and Fathima \cite{MajidFathima}. But, before that we need the following result.

\begin{theorem}
For all integers $\alpha \geq 0$, we have 
\begin{align}\label{18}
\sum_{n\geq 0}A_{5,4}(5^{\alpha + 1}n + 5^{\alpha + 1} - 4) q^n = A_{\alpha} f_5^{2}f_1^{14}+B_{\alpha}q f_5^{8}f_1^{8}+C_{\alpha}q^2 f_5^{14}f_1^{2}+ D_{\alpha}q^3 \sum_{n\geq0} A_{5,4}(n)q^n, 
\end{align}
where $A_0 = 4$, $B_0= 550$, $C_0 = 12500$, $D_0 = 78125$, and for any integer $n\geq1$, $A_n$, $B_n$, $C_n$, and $D_n$ are defined as
\begin{align}
    A_n &= -C_{n-1} + 4 D_{n-1},\label{A}\\
    B_n &= -125B_{n-1} + 550 D_{n-1},\label{B}\\
    C_n &= -15625A_{n-1} + 12500 D_{n-1},\label{C}\\
    D_n &= D_0^{n+1}.\label{D}
\end{align}
\end{theorem}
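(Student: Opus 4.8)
The plan is to argue by induction on $\alpha$, repeatedly applying the $5$-dissection operator that extracts the terms $q^{5n+c}$ from a series, divides by $q^c$, and replaces $q^5$ by $q$. Throughout I would use the generating function $\sum_{n\geq 0}A_{5,4}(n)q^n = f_5^{20}/f_1^4$ coming from \eqref{1} with $t=5$, $k=4$.

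For the base case $\alpha=0$, I would write $f_5^{20}/f_1^4 = f_5^{20}\cdot(1/f_1^4)$ and use that $f_5^{20}$ is a power series in $q^5$. Hence extracting the terms $q^{5n+1}$, dividing by $q$, and sending $q^5\mapsto q$ turns $f_5^{20}$ into $f_1^{20}$ and turns $1/f_1^4$ into $\sum_{n\geq 0}P_4(5n+1)q^n$. Multiplying the evaluation of the latter supplied by Lemma \ref{lem} by $f_1^{20}$ and simplifying the powers of $f_1$ and $f_5$ yields
\[
\sum_{n\geq 0}A_{5,4}(5n+1)q^n = 4 f_5^{2}f_1^{14} + 550\,q f_5^{8}f_1^{8} + 12500\,q^2 f_5^{14}f_1^{2} + 78125\,q^3\sum_{n\geq 0}A_{5,4}(n)q^n,
\]
which is exactly \eqref{18} for $\alpha=0$ with the stated initial values $A_0,B_0,C_0,D_0$.

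For the inductive step, suppose \eqref{18} holds for some $\alpha\geq 0$. Since $5^{\alpha+1}(5n+4)+5^{\alpha+1}-4 = 5^{\alpha+2}n+5^{\alpha+2}-4$, applying the operator that extracts $q^{5n+4}$, divides by $q^4$, and sets $q^5\mapsto q$ to the left-hand side produces $\sum_{n\geq 0}A_{5,4}(5^{\alpha+2}n+5^{\alpha+2}-4)q^n$. I would then apply the same operator term by term to the right-hand side: Lemma \ref{lem1} sends $A_\alpha f_5^2 f_1^{14}$ to $-15625\,A_\alpha q^2 f_5^{14}f_1^2$, Lemma \ref{lem2} sends $B_\alpha q f_5^8 f_1^8$ to $-125\,B_\alpha q f_5^8 f_1^8$, and Lemma \ref{lem3} sends $C_\alpha q^2 f_5^{14}f_1^2$ to $-C_\alpha f_5^2 f_1^{14}$. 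The fourth term $D_\alpha q^3\sum_n A_{5,4}(n)q^n$ is the delicate one: because $q^3 q^{5n+1}=q^{5n+4}$, the operator picks out $A_{5,4}(5n+1)$ and returns $D_\alpha\sum_n A_{5,4}(5n+1)q^n$, which by the base-case identity re-expands into all four basis terms with coefficients $4D_\alpha$, $550D_\alpha$, $12500D_\alpha$, $78125D_\alpha$.

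Collecting the coefficients of $f_5^2 f_1^{14}$, $q f_5^8 f_1^8$, $q^2 f_5^{14}f_1^2$, and $q^3\sum_n A_{5,4}(n)q^n$ then gives precisely the recurrences \eqref{A}, \eqref{B}, and \eqref{C}, together with $D_{\alpha+1}=78125\,D_\alpha=D_0 D_\alpha$; since $D_0=78125$, iterating yields $D_\alpha=D_0^{\alpha+1}$, matching \eqref{D}. The main obstacle here is bookkeeping rather than conceptual: one must verify that the three monomial terms $f_5^2 f_1^{14}$, $q f_5^8 f_1^8$, $q^2 f_5^{14}f_1^2$ form a closed system under the operator (this is exactly the content of Lemmas \ref{lem1}, \ref{lem2}, and \ref{lem3}), and that the generating-function term feeds back through the base case so that each of the four coefficients is governed by the advertised linear recurrence.
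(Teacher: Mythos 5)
Your proposal is correct and follows essentially the same route as the paper: the base case via Lemma \ref{lem} applied to $f_5^{20}/f_1^4$, and the inductive step by applying the $q^{5n+4}$ extraction operator term by term using Lemmas \ref{lem1}, \ref{lem2}, \ref{lem3} and feeding the $D_\alpha q^3$ term back through the $\alpha=0$ identity to obtain the recurrences \eqref{A}--\eqref{D}. Nothing is missing.
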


\begin{proof}
From equation \eqref{1}, we have
\[
 \sum_{n\geq 0}A_{5,4}(n)q^n=\frac{f_5^{20}}{f_1^4}.
\]
From equation \eqref{16}, we have 
\begin{align}
\sum_{n\geq0} A_{5,4}(5n+1)q^n &= 4 f_5^{2}f_1^{14}+550q f_5^{8}f_1^{8}+12500q^2 f_5^{14}f_1^{2}+ 78125q^3 \frac{f_5^{20}}{f_1^{4}} \nonumber \\
& = 4 f_5^{2}f_1^{14}+550q f_5^{8}f_1^{8}+12500q^2 f_5^{14}f_1^{2}+ 78125q^3 \sum_{n\geq0} A_{5,4}(n)q^n. \label{19}
\end{align}
Equation \eqref{19}, is the case for $\alpha = 0$. Now assume that the result holds for all values up to $\alpha+1$ ($\alpha\geq0$). Replacing $n$ by $5n+4$, and by using Lemmas \ref{lem1}, \ref{lem2}, \ref{lem3}, and equation \eqref{19}, we have 
\begin{align*}
&\sum_{n\geq 0}A_{5,4}(5^{\alpha + 2}n + 5^{\alpha + 2} - 4) q^n\\
&\qquad = A_{\alpha}(-15625q^2 f_{1}^2f_{5}^{14}) + B_{\alpha}(-125q f_{1}^8f_{5}^{8}) + C_{\alpha}(- f_{1}^{14}f_{5}^{2}) + D_{\alpha}(4 f_5^{2}f_1^{14}+550q f_5^{8}f_1^{8}\\ & \qquad \qquad  +12500q^2 f_5^{14}f_1^{2}+ 78125q^3 \sum_{n\geq0} A_{5,4}(n)q^n)\\
&\qquad =  (-C_{\alpha} + 4 D_{\alpha})f_{1}^{14}f_{5}^{2} + (-125B_{\alpha} + 550 D_{\alpha})q f_{1}^{8}f_{5}^{8} \\ & \qquad \qquad + (-15625 A_{\alpha} + 12500 D_{\alpha})q^2 f_{1}^2f_{5}^{14} +   D_{\alpha}78125q^3 \sum_{n\geq0}A_{5,4}(n)q^n\\
& \qquad = A_{\alpha+1} f_{1}^{14}f_{5}^{2} + B_{\alpha+1}q f_{1}^{8}f_{5}^{8} + C_{\alpha+1}q^2 f_{1}^{2}f_{5}^{14} + D_{\alpha + 1} q^3 \sum_{n\geq0}A_{5,4}(n)q^n.
\end{align*}
Hence, the result is true by induction.
\end{proof}

We can finally prove Theorem \ref{thm-3} now.
\begin{proof}[Proof of Theorem \ref{thm-3}]
From equations \eqref{A}, \eqref{B}, \eqref{C} and \eqref{D}, we see that
\begin{align*}
 A_{1} & \equiv 0 \pmod{5^5}, & B_{1} &  \equiv 0 \pmod{5^6}, & C_{1} & \equiv 0 \pmod{5^6}, & D_{1} &  \equiv 0\pmod{5^7},\\
A_{2} & \equiv 0 \pmod{5^6}, & B_{2} &  \equiv 0 \pmod{5^7}, & C_{2} &  \equiv 0 \pmod{5^7}, & D_{2}&   \equiv 0 \pmod{5^8},\\
&~~~\vdots &  &~~~\vdots & &~~~\vdots && ~~~\vdots\\
 A_{\alpha} & \equiv 0 \pmod{5^{\alpha+4}}, & B_{\alpha} & \equiv 0 \pmod{5^{\alpha+5}}, & C_{\alpha} & \equiv 0 \pmod{5^{\alpha+5}}, & D_{\alpha}&  \equiv 0 \pmod{5^{\alpha+6}}.
\end{align*}
Now, it is easy to see that equation \eqref{18} implies Theorem \ref{thm-3}.
\end{proof}

\section{Proofs of Theorems \ref{genthm1}, \ref{genthm2} and \ref{genthm3}}\label{sec4}

\begin{proof}[Proof of Theorem \ref{genthm1}]
From the generating function of  $A_{p,p^Ni-1}(n)$ and equation \eqref{lem6}, we have
\[
 \sum_{n\geq0}A_{p,p^Ni-1}(n)q^n= \frac{f_p^{p(p^Ni-1)}}{f_1^{p^Ni-1}}\equiv\frac{f_p^{p(p^Ni-1)}}{f_p^{p^{N-1}i}}f_1\pmod{p^N}.
\]
With the help of equation \eqref{lem4}, we obtain
\[
     \sum_{n\geq 0}A_{p,p^Ni-1}(n)q^n\equiv\frac{f_p^{p(p^Ni-1)}}{f_p^{p^{N-1}i}}\bigg(\sum_{m=-\infty}^{\infty}
(-1)^mq^{m(3m-1)/2}\bigg)\pmod{p^N}.
\]
For some $m$ and $n$, we are interested in finding out whether $m(3m-1)/2 = pn + r$. This is
equivalent to asking whether $24pn + 24r + 1 = (6m- 1)^2$, which implies $24r + 1 \equiv
(6m-1)^2 \pmod p$. However $24r+1$ is a quadratic nonresidue modulo $p$. It follows that
\[
A_{p,p^Ni-1}(pn + r)\equiv0 \pmod {p^{N}}.
\]
\end{proof}

\begin{proof}[Proof of Theorem \ref{genthm2}]
Like before, we have
\[
    \sum_{n\geq0}A_{p,p^Ni-3}(n)q^n= \frac{f_p^{p(p^Ni-3)}}{f_1^{p^Ni-3}}\equiv\frac{f_p^{p(p^Ni-3)}}{f_p^{p^{N-1}i}}f_1^3\pmod{p^N}.
\]
With the help of equation \eqref{lem5}, we obtain
\begin{align}\label{pfgenthm2}
     \sum_{n\geq0}A_{p,p^Ni-3}(n)q^n\equiv\frac{f_p^{p(p^Ni-3)}}{f_p^{p^{N-1}i}}\bigg(  \sum_{m\geq0}
(-1)^{m}(2m + 1)q^{m(m+1)/2}\bigg)\pmod{p^N}.
\end{align}
For some $m$ and $n$, we are interested in finding out whether $m(m + 1)/2 = pn + r$. This is
equivalent to asking whether $8pn + 8r + 1 = (2m+ 1)^2$, which implies $8r + 1 \equiv
(2m+1)^2 \pmod p$. However $8r+1$ is a quadratic nonresidue modulo $p$. It follows that
\[
A_{p,p^Ni-}(pn + r)\equiv0 \pmod {p^{N}}.
\]
\end{proof}

\begin{proof}[Proof of Theorem \ref{genthm3}]
Due to equations \eqref{lem5} and \eqref{pfgenthm2}, we must determine whether $pn + r =
m(m + 1)/2$ for some integers $m$ and $n$. Completing the square and considering
the result modulo $p$ gives $(2m+1)^2 \equiv 8r+1 \equiv 0 \pmod p$. Therefore, $p$ divides $(2m+1)^2$,
implying  that $p$ divides $2m + 1$. Since the coefficient of $q^{m(m+1)/2}$ in the
series representation in equation \eqref{lem5} is exactly $2m + 1$, it follows that the coefficient
we are interested in is congruent to $0$ modulo $p$.
\end{proof}

\section{Proof of Theorem \ref{thm1}}\label{sec5}

The proofs of the congruences are similar in nature. So, we only present proofs of \eqref{eq:1} and \eqref{eq:2}. For others, we just give the corresponding  generating functions.

First, we prove \eqref{eq:1}.
We have, 
\begin{align*}
 \sum_{n\geq0} A_{5,4}(n)q^n= \frac{f_5^{20}}{f_1^{4}}= f_5^{20}\sum_{n\geq0}P_4(n)q^n. 
\end{align*}
Then, extracting the terms involving $q^{5n+1}$ and dividing by $q$ and then replacing $q^5$ by $q$, we obtain
\begin{align*}
     \sum_{n\geq0}A_{5,4}(5n+1)q^n= f_1^{20}\sum_{n\geq0}P_4(5n+1)q^n.
\end{align*}
With the help of Lemma \ref{lem}, we get
\begin{align*}
   \sum_{n\geq0} A_{5,4}(5n+1)q^n
    &= f_1^{20}\bigg(4\frac{f_5^2}{f_1^6}+550q\frac{f_5^8}{f_1^{12}}+12500q^2\frac{f_5^{14}}{f_1^{18}}+78125q^3\frac{f_5^{20}}{f_1^{24}}\bigg)\\
     &= 4f_5^2f_1^{14}+550q f_5^8 f_1^{8}+12500q^2 f_5^{14}f^{2}+78125q^3\frac{f_5^{20}}{f_1^{4}}.
\end{align*}
Using equations \eqref{eq:bcb} and \eqref{f_1}, extracting the terms involving $q^{5n+4}$, dividing by $q$ and then replacing $q^5$ by $q$, we arrive at

\begin{multline*}
     \sum_{n\geq0}A_{5,4}(25n+21)q^n= 3125\bigg( -4 f_1^{14} f_5^2 +100\frac{ f_5^{20}}{f_1^4 R^{15}(q)} +q \left(10450\frac{ f_5^{20}}{f_1^4 R^{10}(q)}-22 f_1^8 f_5^8\right) \\+q^2 \left(46000\frac{ f_5^{20}}{f_1^4 R^5(q)}-62500 f_1^2 f_5^{14}\right) +25375q^3\frac{ f_5^{20} }{f_1^4}\\-46000q^4\frac{ f_5^{20}  R^5(q)}{f_1^4}+10450q^5\frac{ f_5^{20}  R^{10}(q)}{f_1^4}-100q^6\frac{ f_5^{20}  R^{15}(q)}{f_1^4}\bigg),   
\end{multline*}
which on usage of \eqref{rel. of f and R} reduces to 
\begin{align*}
     \sum_{n\geq0}A_{5,4}(25n+21)q^n=3125\bigg(96 f_1^{14} f_5^2+13728 q f_1^8 f_5^8+312480 q^2f_1^2 f_5^{14} +1953125 q^3\frac{ f_5^{20}}{f_1^4}\bigg),
     \end{align*}
which implies \eqref{eq:1}.

Proceeding in a similar way, we can also deduce
\begin{align*}
     \sum_{n\geq0}A_{5,4}(125n+121)q^n=&~15625\bigg(1500004 f_1^{14} f_5^2+214500550 q f_1^8 f_5^8 +4882512500 q^2 f_1^2 f_5^{14}\\
     &+30517578125 q^3\frac{ f_5^{20} }{f_1^4}\bigg)
     \end{align*}
     which proves \eqref{eq:2}.

Now, we give the generating functions which will complete the proofs of the other congruences stated in the result:
\begin{align*}
\sum_{n\geq0}A_{5,2}(25n+23)q^n&=25\bigg(48f_1^4f_5^4+625q\dfrac{f_5^{10}}{f_1^2}\bigg),\\ 
\sum_{n\geq0}A_{5,2}(125n+123)q^n&=125\bigg(1202 f_1^4 f_5^4+15625 q\frac{ f_5^{10} }{f_1^2}\bigg),\\
\sum_{n\geq0}A_{5,3}(25n+22)q^n&=5\bigg(5838 f_1^9 f_5^3+233250q f_1^3 f_5^9 +1953125 q^2\frac{ f_5^{15} }{f_1^3}\bigg),\\
\sum_{n\geq0}A_{5,3}(125n+122)q^n&=25\bigg(3643791 f_1^9 f_5^3+145754625 q f_1^3 f_5^9 +1220703125 q^2\frac{ f_5^{15}}{f_1^3}\bigg).
\end{align*}
This completes the proof of Theorem \ref{thm1}.

\section{Proof of Theorem \ref{extthm1}}\label{sec6}

Without loss of generality, we may assume that $\displaystyle{r=\sum_{j=0}^{M-1}p^jr_j}$ for $0\le r_j\le p-1$, as $\displaystyle{\sum_{j=0}^{M-1}p^jr_j}$ can take any value between 1 and $p^M-1$. For integers  $M\ge1$ (sufficiently large) and $N\ge1$, we have
\begin{align*}
\sum_{n\geq0} A_{p,p^{M+N-1}i+k}(n)q^n = \dfrac{f_p^{p(p^{M+N-1}i+k)}}{f_1^{p^{M+N-1}i+k}}&= \dfrac{f_{p}^{p^{M+N}i}}{f_{1}^{p^{M+N-1}i}}\sum_{n=0} A_{p,k}(n)q^n\\
&\equiv f_{p}^{p^{M+N-2}(p^2-1)i}\sum_{n=0} A_{p,k}(n)q^n \pmod{p^N}.
\end{align*}

Extracting the terms that involve $q^{pn+r_0}$ from the above identity, we obtain 
\begin{align*}
\sum_{n\geq0} A_{p,p^{M+N-1}i+k}(pn+r_0)q^n 
&\equiv f_{1}^{p^{M+N-2}(p^2-1)i}\sum_{n=0} A_{p,k}(pn+r_0)q^n \\
&\equiv f_{p}^{p^{M+N-3}(p^2-1)i}\sum_{n=0} A_{p,k}(pn+r_0)q^n \pmod{p^N}.
\end{align*}
Now, extracting the terms that involve $q^{pn+r_1}$ from the above identity, we obtain
 \begin{align*}
     \sum_{n\geq0} A_{p,p^{M+N-1}i+k}(p^{2}n+r_0+ pr_1)q^n
&\equiv f_{1}^{p^{M+N-3}(p^2-1)i}\sum_{n\geq0}A_{p,k}(p^{2}n+r_0+ pr_1)q^n \\
&\equiv f_{p}^{p^{M+N-4}(p^2-1)i}\sum_{n\geq0}A_{p,k}(p^{2}n+r_0+ pr_1)q^n \pmod{p^N}.
 \end{align*}

From the above identity, we extract the terms that contain $q^{pn+r_2}$, and from the resulting identity, we again  extract the terms that contain $q^{pn+r_3}$ and so on. It can be seen that after the $M$-th extraction using this iterative scheme, we arrive at
\begin{multline}\label{cor1}
    \sum_{n\geq0} A_{p,p^{M+N-1}i+k}(p^{M}n+r_0+ pr_1+\cdots+p^{M-1}r_{M-1})q^n\\
\equiv f_{1}^{p^{N-1}(p^2-1)i}\sum_{n\geq0}A_{p,k}(p^{M}n+r_0+ pr_1+\cdots+p^{M-1}r_{M-1})q^n \pmod{p^N}.
\end{multline}
Therefore, if we assume that $A_{p,k}(p^{M}n+r_0+ pr_1+\cdots+p^{M-1}r_{M-1})=A_{p,k}(p^{M}n+r)\equiv 0 \pmod{p^N}$, then from the above identity, we have
\[
A_{p,p^{M+N-1}i+k}(p^{M}n+r)\equiv 0 \pmod{p^N}.
\]
This completes the proof of Theorem \ref{extthm1}.

We have the following easy corollary, which follows from equation \eqref{cor1} when $M=1$.
\begin{corollary}\label{cor2}
Let $p$ be a prime, $k\geq1$, $j\geq0$, $N\geq1$, and $r$ be integers
such that $1 \leq r \leq p -1$. Then for all $n\geq0$, we have
\[
   \sum_{n\geq0} A_{p,p^{N}i+k}(pn + r)q^n\equiv f_1^{p^{N-1}(p^2 -1)i} \sum_{n\geq0}A_{p,k}(pn + r)q^n \pmod {p^N}.
\]
\end{corollary}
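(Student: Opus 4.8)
The plan is to obtain this corollary as the single-dissection specialization ($M=1$) of the argument proving Theorem \ref{extthm1}; I would present it directly so that it is self-contained. First I would start from the generating function and peel off the factor corresponding to the base multiplicity $k$:
\[
\sum_{n\geq0} A_{p,p^{N}i+k}(n)q^n=\frac{f_p^{p(p^{N}i+k)}}{f_1^{p^{N}i+k}}=\frac{f_p^{p^{N+1}i}}{f_1^{p^{N}i}}\cdot\frac{f_p^{pk}}{f_1^{k}}=\frac{f_p^{p^{N+1}i}}{f_1^{p^{N}i}}\sum_{n\geq0} A_{p,k}(n)q^n.
\]

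Next I would reduce the prefactor modulo $p^N$ using equation \eqref{lem6}. Since $f_1^{p^N}\equiv f_p^{p^{N-1}}\pmod{p^N}$, raising to the $i$-th power and inverting (both series are units in $\mathbb{Z}[[q]]$, so congruences are preserved under taking inverses) gives $1/f_1^{p^{N}i}\equiv 1/f_p^{p^{N-1}i}\pmod{p^N}$. Substituting and simplifying the exponent via $p^{N+1}i-p^{N-1}i=p^{N-1}(p^2-1)i$ yields
\[
\sum_{n\geq0} A_{p,p^{N}i+k}(n)q^n\equiv f_p^{p^{N-1}(p^2-1)i}\sum_{n\geq0} A_{p,k}(n)q^n\pmod{p^N}.
\]

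Finally I would extract the arithmetic progression $n\equiv r\pmod p$ from both sides, divide by $q^r$, and replace $q^p$ by $q$. Because $1\le r\le p-1$, this is a single extraction (the $M=1$ case), and the prefactor $f_p^{p^{N-1}(p^2-1)i}$, being a power series in $q^p$, is unaffected by the dissection and becomes $f_1^{p^{N-1}(p^2-1)i}$; meanwhile the two coefficient series select $A_{p,p^{N}i+k}(pn+r)$ and $A_{p,k}(pn+r)$, respectively. This gives exactly the asserted congruence, and it coincides with equation \eqref{cor1} evaluated at $M=1$ (where the unique base-$p$ digit $r_0$ equals $r$).

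There is no genuine obstacle here, as the statement is a routine corollary; the only points meriting care are the exponent bookkeeping when invoking \eqref{lem6} — tracking that $p\cdot p^{N}i=p^{N+1}i$ in the numerator and the substituted $p^{N-1}i$ in the denominator combine to the clean exponent $p^{N-1}(p^2-1)i$ — and the observation that, precisely because $r$ is a single digit in base $p$, one extraction (rather than the $M$ iterated extractions of the general proof) already produces the result.
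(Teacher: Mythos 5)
Your proposal is correct and follows essentially the same route as the paper: the paper proves Corollary \ref{cor2} as the $M=1$ specialization of equation \eqref{cor1}, which is derived by exactly your steps — factoring out $\frac{f_p^{p^{N+1}i}}{f_1^{p^N i}}$, reducing it to $f_p^{p^{N-1}(p^2-1)i}$ via \eqref{lem6}, and performing a single $p$-dissection in which the prefactor (a series in $q^p$) passes through unchanged. Your explicit remarks about inverting the congruence for the unit power series and the exponent bookkeeping are points the paper leaves implicit, but the argument is the same.
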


\section{Concluding Remarks}\label{sec7}
\begin{enumerate}

\item We have found several congruences modulo powers of $5$, individual as well as infinite families similar to those stated in Theorem \ref{extthm1}, for $A_{5,k}(n)$ for higher values of $k$. The proofs of these are routine execises similar to the proofs of \eqref{eq:1} and \eqref{eq:2} hence, they are not proved here. For instance, the following congruences are true:
\begin{align}
A_{5,6}(25n+14,19,24)&\equiv 0 \pmod{25},\\
A_{5,6}(125n+119)&\equiv 0 \pmod{125},\\
A_{5,7}(25n+13,18,23)&\equiv 0 \pmod{25},\\
A_{5,7}(125n+118)&\equiv 0 \pmod{125},\\
A_{5,25i+6}(25n+14,19,24)&\equiv 0 \pmod{5},\\
A_{5,125i+6}(25n+14,19,24)&\equiv 0 \pmod{25},\\
A_{5,3125i+6}(125n+119)&\equiv 0 \pmod{125},\\
A_{5,25i+7}(25n+13,18,23)&\equiv 0 \pmod{5},\\
A_{5,125i+7}(25n+13,18,23)&\equiv 0 \pmod{25},\\
A_{5,3125i+7}(125n+118)&\equiv 0 \pmod{125}.
\end{align}

\item Looking at the sequence of results in equations \eqref{eq:d}, \eqref{eq:f} and \eqref{eq:1}, it suggests the possibility of there being other such structures modulo higher powers of $5$, and more generally for $A_{t,k}(n)$ modulo higher powers of $t$. Experiments also suggest some sort of cyclic behaviour, which we do not conjecture here, but it would be interesting to know what can be said about $ A_{5,5i+3}(5^\alpha n+5^\alpha -3)\pmod {5^\alpha}$ and $ A_{5,5i+2}(5^\alpha n+5^\alpha -2)\pmod {5^\alpha}$?
\item Experiments suggest some additional infinite family of congruences modulo powers of $5$ which are stronger results than those indicated by Theorem \ref{extthm1}. We present them here as conjectures:
\begin{align}
A_{5,5i+1}(25n+24)&\equiv 0 \pmod{25},\\
A_{5,25i+2}(25n+23)&\equiv 0 \pmod{25},\\
A_{5,125i+2}(125n+123)&\equiv 0 \pmod{125},\\
A_{5,125i+3}(125n+122)&\equiv 0 \pmod{25},\\
A_{5,625i+4}(125n+121)&\equiv 0 \pmod{3125},\\
A_{5,3125i+4}(125n+121)&\equiv 0 \pmod{15625},\\
A_{5,25i+6}(25n+14,19)&\equiv 0 \pmod{25},\\
A_{5,25i+6}(125n+119)&\equiv 0 \pmod{125},
\\A_{5,25i+7}(25n+13,18,23)&\equiv 0 \pmod{25},
\\A_{5,125i+7}(125n+118)&\equiv 0 \pmod{125}.
\end{align}
\item Many other congruences modulo higher powers of 5 may exist for higher subsequences. It may be interesting to study those congruences. 
\end{enumerate}

\section*{Acknowledgements}

The second author is partially supported by the institutional fellowship for doctoral research from Tezpur University, Napaam, India. The third author was partially supported by the Council of Scientific \& Industrial Research (CSIR), Government of India under the CSIR-JRF scheme. The author thanks the funding agency.

\bibliographystyle{alpha}

\begin{thebibliography}{Ran16}

\bibitem[Ber91]{BerndtIII}
Bruce~C. Berndt.
\newblock {\em Ramanujan's notebooks. {P}art {III}}.
\newblock Springer-Verlag, New York, 1991.

\bibitem[Ber06]{Spirit}
Bruce~C. Berndt.
\newblock {\em Number theory in the spirit of {R}amanujan}, volume~34 of {\em
  Student Mathematical Library}.
\newblock American Mathematical Society, Providence, RI, 2006.

\bibitem[MF22]{MajidFathima}
N.~V. Majid and S.~N. Fathima.
\newblock On a {R}amanujan-type congruence for partition triples with 5-cores.
\newblock {\em J. Integer Seq.}, 25(6):Art. 22.6.2, 7, 2022.

\bibitem[Ran16]{Dasappa}
Dasappa Ranganatha.
\newblock On a {R}amanujan-type congruence for bipartitions with 5-cores.
\newblock {\em J. Integer Seq.}, 19(8):Art. 16.8.1, 5, 2016.

\end{thebibliography}

\end{document}